\newtheorem{theorem}{Theorem}[section]
\newtheorem{lemma}[theorem]{Lemma}
\theoremstyle{definition}
\newtheorem{definition}[theorem]{Definition}
\theoremstyle{Corollary}
\theoremstyle{remark}
\newtheorem{remark}[theorem]{Remark}
\numberwithin{equation}{section}
\begin{document}

\title{ HARMONIC MAPS FROM $\mathbb{C}^{n}$ to K\"{a}hler manifolds}

\author{}
\address{}
\curraddr{} \email{}
\thanks{}

\author{Jianming Wan}
\address{Department of Mathematics, Northwest University, Xi'an 710127, China } \email{wanj\_m@aliyun.com }
\thanks{The research is supported by the National Natural Science Foundation of China
N0.11301416}

\subjclass[2000]{Primary 53C55}


\date{}

\dedicatory{}

\keywords{harmonic maps, holomorphic maps}

\begin{abstract}
In this paper, we shall prove that a harmonic map from $\mathbb{C}^{n}$ ($n\geq2$) to
any K\"{a}hler manifold must be holomorphic under an assumption of energy density. It can be considered as
a complex analogue of the Liouville type theorem for harmonic maps obtained by Sealey.
\end{abstract}

\maketitle

\section*{}

\specialsection*{}



\section{Introduction}
The classical Liouville theorem says that a bounded
harmonic function on $\mathbb{R}^{n}$ (or holomorphic function on $\mathbb{C}^{n}$) has to be constant. Sealey (see
\cite{[S]} or \cite{[X]}) gave an analogue for harmonic maps. He proved that
a harmonic map of finite energy from $\mathbb{R}^{n}$ ($n\geq2$) to
any Riemannian manifold must be a constant map. In this paper we
consider the complex analogue of Sealey's result, that is the following

\textbf{Question: } \emph{Must a harmonic map with finite
$\bar{\partial}$-energy from $\mathbb{C}^{n}$ ($n\geq2$) to any
K\"{a}hler manifold be
 holomorphic }?

On the other hand, from Siu-Yau's proof of Frankel conjecture \cite{[SY]}
(the key is to prove a stable harmonic map from $S^{2}$ to $\mathbb{C}\mathbb{P}^{n}$ is
 holomorphic or conjugate holomorphic), we know that
it is very important to study the holomorphicity of harmonic maps. So
the above question is obviously interesting. We hope that
it is true. But we do not know how to prove it. Our partial
result can be stated as follows

\begin{theorem}
Let $f$ be a harmonic map from
$\mathbb{C}^{n}$ ($n\geq2$) to any K\"{a}hler manifold. Let $e(f)$ be the energy
density  and $e^{''}(f)$ be the  $\bar{\partial}$-energy density. If
\begin{equation}
e(f)e^{''}(f)(p)=O(\frac{1}{R^{4n+\alpha}})\footnote{The notation $O$ means $e(f)e^{''}(f)(p)\leq C\frac{1}{R^{4n+\alpha}}$ for some $C>0$ and sufficiently large $R$.} \label{1.1}
\end{equation} for some $\alpha>0$, where $R$ denotes the distance from origin to $p$, then $f$ is a holomorphic map.
\end{theorem}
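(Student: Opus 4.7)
The plan is to prove $\int_{\mathbb{C}^n} e''(f)\,dV = 0$ by a complex analogue of Sealey's stress-energy argument; once this vanishes, $\bar\partial f \equiv 0$ and $f$ is holomorphic.

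For a harmonic map between K\"ahler manifolds, the $\bar\partial$-part of the energy carries its own stress-energy tensor. On $\mathbb{C}^n$ I would introduce the Hermitian $(1,1)$ tensor
$$ S''_{\alpha\bar\beta} \;:=\; e''(f)\,g_{\alpha\bar\beta} \;-\; h_{a\bar b}(f)\,\partial_\alpha \bar f^{\,b}\,\partial_{\bar\beta} f^{\,a}, $$
whose Hermitian trace is $g^{\bar\beta\alpha} S''_{\alpha\bar\beta} = (n-1)\,e''(f)$. The crucial K\"ahler-geometric input is the conservation law $\nabla^\alpha S''_{\alpha\bar\beta} = 0$, which for harmonic maps between K\"ahler manifolds is derived by rewriting the harmonic-map equation in the form $\partial(\bar\partial f)$ vanishes modulo the target Christoffel terms and exploiting the vanishing of mixed-type Christoffel symbols on a K\"ahler target. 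After taking the real part of $S''$, this produces a real symmetric divergence-free tensor whose trace equals $2(n-1)\,e''(f)$.

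With the conservation law in hand, pair $S''$ with the holomorphic radial vector field $X = z^\alpha \partial_\alpha$ (for which $\nabla_\beta X^\alpha = \delta^\alpha_\beta$). Stokes' theorem on the Euclidean ball $B_R = \{|z| < R\}$ then gives the identity
$$ (n-1)\int_{B_R} e''(f)\, dV \;=\; \int_{\partial B_R} S''(X,\nu)\, d\sigma. $$
The prefactor $(n-1)$ is precisely why the theorem requires $n \geq 2$ (and matches the fact that harmonic maps from $\mathbb{C}$ need not be holomorphic). To estimate the right-hand side, use $|X| = R$ on $\partial B_R$, the pointwise bound $|S''| \leq C\,e''(f)$, and the elementary inequality $e''(f) \leq \sqrt{e(f)\,e''(f)}$ (which holds because $e''(f) \leq e(f)$). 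Combined with the hypothesis~\eqref{1.1} and the $R^{2n-1}$ order of the sphere volume, this yields
$$ \left|\int_{\partial B_R} S''(X,\nu)\, d\sigma\right| \;\leq\; C\,R \cdot R^{2n-1} \cdot R^{-(2n+\alpha/2)} \;=\; C\,R^{-\alpha/2} \;\longrightarrow\; 0 $$
as $R \to \infty$. Letting $R \to \infty$ forces $\int_{\mathbb{C}^n} e''(f)\, dV = 0$, hence $e''(f) \equiv 0$ and $f$ is holomorphic.

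The principal obstacle is verifying the conservation law $\nabla^\alpha S''_{\alpha\bar\beta} = 0$ for the Dolbeault piece $S''$ alone. The full real stress-energy tensor $e(f)g - f^*h$ of a harmonic map is always divergence-free by Noether's theorem, but isolating a separately-conserved ``anti-holomorphic'' piece uses the K\"ahler structure of both domain and target in an essential way: the computation requires the K\"ahler identity $\partial_{\bar c} h_{a\bar b} = 0$ at the origin of normal coordinates (together with its covariant form) and careful tracking of the target Christoffel symbols across the harmonic-map equation. Once this is in place, the remaining Stokes-plus-decay estimate is routine, and the decay exponent $4n+\alpha$ in the hypothesis is precisely tuned so that after the $R^{2n-1}$ sphere-volume factor and the factor $R$ from the radial vector field, the boundary term carries the negative power $R^{-\alpha/2}$.
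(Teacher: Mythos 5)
Your overall architecture is sound and is, at bottom, the same argument as the paper's: pairing the stress--energy tensor of the $\bar\partial$-energy with the dilation field $r\frac{\partial}{\partial r}$ is exactly what the paper does when it differentiates the family $f_{t}(x)=f(tx)$ and combines the scaling identity $E(R,t)=t^{2-2n}E(Rt,1)$ with a first-variation computation (Lemmas \ref{l3.1}--\ref{l3.2}). Your Pohozaev-type identity $(n-1)\int_{B_{R}}e''\,dV=\int_{\partial B_{R}}S''(X,\nu)\,d\sigma$ is the integrated form of the paper's identity $(2-2n)E(R,1)+R\,\partial_{R}E(R,1)=\text{(boundary term)}$, and your decay estimate via $e''\le\sqrt{e\,e''}=O(R^{-2n-\alpha/2})$ matches Section 4 of the paper; your endgame is in fact slightly cleaner, since you bound the boundary term in absolute value and let $R\to\infty$ rather than running the paper's contradiction through the differential inequality $R\,\partial_{R}E\ge-\epsilon+(2n-2)E_{0}$.

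The genuine gap is the one you flag yourself: the conservation law $\nabla^{\alpha}S''_{\alpha\bar\beta}=0$ carries the entire complex-geometric content of the theorem, and you assert it rather than prove it; moreover the justification you sketch points in a slightly wrong direction. The relation $\partial_{\bar c}h_{a\bar b}=0$ is not a K\"ahler identity (it holds at the center of normal coordinates for any metric); the K\"ahler input is the symmetry $\partial_{c}h_{a\bar b}=\partial_{a}h_{c\bar b}$, i.e.\ $d\omega^{N}=0$. The clean route to the conservation law---and the one the paper in effect takes---is to write $e''=\frac{1}{2}\bigl(e(f)-\langle f^{*}\omega^{N},\omega\rangle\bigr)$ and observe that under any domain deformation the variation of $\int_{B_{R}}\langle f_{t}^{*}\omega^{N},\omega\rangle\,dv=\int_{B_{R}}f_{t}^{*}\omega^{N}\wedge\omega^{n-1}/(n-1)!$ is a pure boundary term, because $\frac{d}{dt}f_{t}^{*}\omega^{N}=d\theta_{t}$ (closedness of $\omega^{N}$) and $*\omega=\omega^{n-1}/(n-1)!$ is closed; combined with the vanishing of the interior term in the first variation of $\int_{B_{R}}e(f_{t})$ coming from $\tau(f)=0$, this yields exactly the divergence-free property of $S''$ that you need. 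Until that lemma is actually established---by this route or by the Christoffel-symbol computation you allude to---the proof is incomplete; once it is in place, the rest of your argument goes through and the two proofs coincide in substance.
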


The condition \ref{1.1} implies that the $\bar{\partial}$-energy is finite. Since
$$(e^{''}(f))^{2}\leq e(f)e^{''}(f)=O(\frac{1}{R^{4n+\alpha}}),$$ one has $$e^{''}(f)=O(\frac{1}{R^{2n+\frac{\alpha}{2}}}).$$ This leads to
$$\int_{\mathbb{C}^{n}}e^{''}(f)dv<\infty.$$

Note that we do not have any curvature assumption for the target manifold.

We should mention some other related holomorphicity of harmonic maps. For instance, Dong \cite{[D]} established many holomorphicity under the assumption of target manifolds with strongly semi-negative
curvature. In \cite{[X1]} Xin obtained some holomorphicity of harmonic maps from a complete Riemann surface into $\mathbb{C}\mathbb{P}^{n}$.

If the target manifold is $\mathbb{C}^{m}$ ( in this case
every component of the map is a harmonic function), then the answer of above
question is positive (see \cite{[W]}).

The main idea of the proof of theorem 1.1 is to consider a one
parameter family of maps and study the $\bar{\partial}$-energy
variation.

The rest of the paper is organized as follows: Section 2 contains
some basic materials of harmonic maps; In section 3 we study the
first variation of $\bar{\partial}$-energy ; Theorem 1.1 is proved in the last section.

\section{preliminaries }
The materials in this section may be found in the book of Xin
\cite{[X]}.

\subsection{Basic concepts of harmonic maps}
Let $f$ be a smooth map between two Riemannian manifolds ($M, g$)
and ($N, h$). We can define the energy density of $f$ by
$$e(f)=\frac{1}{2}trace|df|^{2}=\frac{1}{2}\sum_{i=1}^{m}\langle f_{*}e_{i},
f_{*}e_{i}\rangle,$$ where $\{e_{i}\}$ ($i=1, ..., m=dimM$) is a local
orthonormal frame field of $M$. The energy integral is defined by
$$E(f)=\int_{M}e(f)dv.$$ If we choose local coordinates
$\{x^{i}\}$ and $\{y^{\alpha}\}$ in $M$ and $N$, respectively, the energy density can be written
as
\begin{equation}
e(f)(x)=\frac{1}{2}g^{ij}(x)\frac{\partial
f^{\alpha}(x)}{\partial x^{i}}\frac{\partial f^{\beta}(x)}{\partial
x^{j}}h_{\alpha\beta}(f(x)). \label{f2.1}
\end{equation}
 The tension field of $f$ is
$$\tau(f)=(\nabla_{e_{i}}df)(e_{i}),$$ where $\nabla$ is the
induced connection on the pull-back bundle $f^{-1}TN$ over $M$ from those of $M$ and $N$.

\begin{definition}
We say that $f$ is a harmonic map if $\tau(f)=0$.
\end{definition}

From the variation point of view, a harmonic map can be seen as the
critical point of energy integral functional. Let $f_{t}$ be a one
parameter family of maps. We can regard it as a smooth map from
$M\times(-\epsilon, \epsilon)\rightarrow N$. Let $f_{0}=f$,
$\frac{df_{t}}{dt}|_{t=0}=v$. Then we have the first variation
formula (see \cite{[X]}) 
\begin{equation}\frac{d}{dt}E(f_{t})|_{t=0}=\int_{M}div
Wdv-\int_{M}\langle v, \tau(f)\rangle dv,  \label{f2.2}
\end{equation}
 where $W=<v,
f_{*}e_{j}>e_{j}$. If $M$ is compact, then $\int_{M}div Wdv=0$. We
know that a harmonic map is the critical point of energy
functional.
\subsection{$\bar{\partial}$-energy}
Let us consider the complex case.

Let $f$ be a smooth map from $\mathbb{C}^{n}$ to a K\"{a}hler
manifold $N$. Let $J$ be the standard complex structure of
$\mathbb{C}^{n}$ and $J^{'}$ be the complex structure of $N$.
$\omega$ and $\omega^{N}$ are the corresponding K\"{a}hler forms of
$\mathbb{C}^{n}$ and $N$ (i.e. $\omega(\cdot, \cdot)=\langle J\cdot,
\cdot\rangle$ and $\omega^{N}(\cdot, \cdot)=\langle J^{'}\cdot, \cdot\rangle$). The $\bar{\partial}$-energy density is defined by
\begin{eqnarray*}
e''(f)=|\bar{\partial} f|^{2} & = & |f_{*}J-J'f_{*}|^{2}\\
                            & = & \frac{1}{4}(|f_{*}e_{i}|^{2}+|f_{*}Je_{i}|^{2}-2\langle J^{'}f_{*}e_{i},f_{*}Je_{i}\rangle)\\
                            & = & \frac{1}{2}(e(f)-\langle f^{*}\omega^{N},\omega^{M}\rangle),\\
\end{eqnarray*}
where $\{e_{i}, Je_{i}\}$ ($i=1, ..., n$) is the Hermitian frame of
$\mathbb{C}^{n}$ and $\langle f^{*}\omega^{N},\omega\rangle$ denotes the induced
norm. We call that $f$ is holomorphic if $f_{*}J=J^{'}f_{*}$. Obviously $f$
is holomorphic if and only if $|\bar{\partial}f|^{2}\equiv0$.

It is well known that a holomorphic map between two K\"{a}hler manifolds must be harmonic (c.f. \cite{[X]}).

We denote $\bar{\partial}$-energy by
$$E_{\bar{\partial}}(f)=\int_{\mathbb{C}^{n}}|\bar{\partial}f|^{2}dv.$$

\section{$\bar{\partial}$-energy variation}
Let us consider the one parameter family of maps $f_{t}(x)=f(tx):
\mathbb{C}^{n}\longrightarrow N, t\in(1-\epsilon, 1+\epsilon)$ and
$f_{1}=f$. Let $B_{R}$ denote the Euclid ball in $\mathbb{C}^{n}$ of
radius $R$ around $0$. We write
$$E(R,t)=\int_{B_{R}}|\bar{\partial}f_{t}|^{2}dv.$$

\begin{lemma}
$E(R,t)=t^{2-2n}E(Rt,1)$. \label{l3.1}
\end{lemma}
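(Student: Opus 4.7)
The plan is a direct change-of-variables calculation. The map $f_t$ is defined by $f_t(x) = f(tx)$, so by the chain rule, $(df_t)_x = t \cdot (df)_{tx}$, i.e., the pushforward scales by $t$. Since the complex structure $J$ on $\mathbb{C}^n$ is translation- and scaling-invariant, and $J'$ acts pointwise on $TN$ at the image point, the same factor passes through the $\bar{\partial}$-operator: $\bar{\partial} f_t(x) = t \cdot (\bar{\partial} f)(tx)$. Taking squared norms yields $|\bar{\partial} f_t|^2(x) = t^2 |\bar{\partial} f|^2(tx)$.

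Next I would substitute this into the definition of $E(R,t)$ and perform the change of variables $y = tx$. On $\mathbb{C}^n \cong \mathbb{R}^{2n}$, the Euclidean volume element transforms as $dv(y) = t^{2n} dv(x)$, and the ball $B_R$ in the $x$-variable corresponds to $B_{Rt}$ in the $y$-variable. Thus
\begin{equation*}
E(R,t) = \int_{B_R} t^2 |\bar{\partial} f|^2(tx)\, dv(x) = t^2 \cdot t^{-2n}\int_{B_{Rt}} |\bar{\partial} f|^2(y)\, dv(y) = t^{2-2n} E(Rt,1).
\end{equation*}

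There is no real obstacle here; the lemma is purely a scaling identity. The only place one has to be careful is verifying that the $\bar{\partial}$-energy density really scales as $t^2$ rather than picking up some anomalous factor from the target's complex structure; but since $J'$ depends only on the basepoint in $N$ and the pushforward scaling is uniform in $t$, this is immediate from the formula $\bar{\partial}f = \tfrac{1}{2}(f_* J - J' f_*)$ given in Section 2.
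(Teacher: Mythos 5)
Your proof is correct and follows essentially the same route as the paper: establish the pointwise scaling $|\bar{\partial}f_{t}|^{2}(x)=t^{2}|\bar{\partial}f|^{2}(tx)$ and then change variables $y=tx$ to pick up the factor $t^{-2n}$ from the volume element. The only cosmetic difference is that you scale $\bar{\partial}f=f_{*}J-J'f_{*}$ directly via the chain rule, whereas the paper scales $e(f)$ and $\langle f^{*}\omega^{N},\omega\rangle$ separately and combines them through the identity $e''(f)=\tfrac{1}{2}(e(f)-\langle f^{*}\omega^{N},\omega\rangle)$; both yield the same conclusion.
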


\begin{proof}
Under the standard Hermitian metric of $\mathbb{C}^{n}$,
$g^{ij}=\delta_{ij}$, from \ref{f2.1} we have
$$e(f_{t})(x)=t^{2}e(f)(tx).$$ By using the natural coordinates, it is
 easy to show that
$$\langle f^{*}_{t}\omega^{N}, \omega\rangle(x)=t^{2}\langle f^{*}\omega^{N}, \omega\rangle(tx).$$
So we get
$$|\bar{\partial}f_{t}|^{2}(x)=t^{2}|\bar{\partial}f|^{2}(tx).$$
It is easy to check that
$$\int_{B_{R}}|\bar{\partial}f_{t}|^{2}dv=t^{2-2n}\int_{B_{Rt}}|\bar{\partial}f|^{2}dv.$$
Thus we obtain the lemma.
\end{proof}

We now prove the following variation formula of
$\bar{\partial}$-energy.
\begin{lemma}
$\frac{\partial E(R,t)}{\partial t}|_{t=1}=\frac{R}{2}\int_{\partial
B_{R}}(|f_{*}\frac{\partial}{\partial
r}|^{2}-\langle J^{'}f_{*}\frac{\partial}{\partial r},
f_{*}J\frac{\partial}{\partial r}\rangle)dv.$ \label{l3.2}
\end{lemma}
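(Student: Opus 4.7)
The plan is to decompose $|\bar{\partial} f|^2 = \frac{1}{2}(e(f) - \langle f^*\omega^N,\omega\rangle)$, so that $E(R,t) = \frac{1}{2}\int_{B_R}e(f_t)\,dv - \frac{1}{2}\int_{B_R}\langle f_t^*\omega^N,\omega\rangle\,dv$, and to differentiate each summand separately at $t=1$ using the variation field $V(x) = \frac{d}{dt}f_t(x)|_{t=1} = x^i(\partial_i f)(x) = r\,f_*\partial_r$, where $r = |x|$.

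For the energy summand I will apply the first variation formula (2.2) on the bounded domain $B_R$. Because $f$ is harmonic, the interior contribution $-\int_{B_R}\langle V,\tau(f)\rangle\,dv$ drops out, and the divergence term becomes the boundary integral $\int_{\partial B_R}\langle W,\partial_r\rangle\,dv = \int_{\partial B_R}\langle V,f_*\partial_r\rangle\,dv = R\int_{\partial B_R}|f_*\partial_r|^2\,dv$.

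For the K\"ahler-pairing summand I will use the standard identity $\langle f^*\omega^N,\omega\rangle\,dv = \frac{1}{(n-1)!}f^*\omega^N\wedge\omega^{n-1}$. The pullback variation formula together with $d\omega^N=0$ gives $\frac{d}{dt}f_t^*\omega^N|_{t=1} = d\gamma$, where $\gamma$ is the $1$-form on $\mathbb{C}^n$ with $\gamma(X) = \omega^N(V,f_*X) = r\langle J'f_*\partial_r, f_*X\rangle$. Since $d\omega = 0$, Stokes' theorem reduces the second variation to $\int_{\partial B_R}\gamma\wedge\omega^{n-1}/(n-1)!$. To evaluate this I will pick a Hermitian frame $\{e_1, Je_1, \ldots, e_n, Je_n\}$ along $\partial B_R$ with $e_1 = \partial_r$, decompose $\omega^{n-1}/(n-1)! = \sum_i \hat{\omega}_i$ (each $\hat{\omega}_i$ omitting the pair $e^i\wedge Je^i$), and observe that every $\hat{\omega}_i$ with $i\geq 2$ contains the factor $e^1 = dr$ and so vanishes on the sphere. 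Only $\gamma\wedge\hat{\omega}_1$ survives, and since $\gamma(e_1) = r\langle J'f_*\partial_r,f_*\partial_r\rangle = 0$ by antisymmetry of $J'$, only the $Je^1$-component $\gamma(Je_1)Je^1 = r\langle J'f_*\partial_r,f_*J\partial_r\rangle\,Je^1$ can pair nontrivially with $\hat{\omega}_1 = e^2\wedge Je^2\wedge\cdots\wedge e^n\wedge Je^n$. This yields the single boundary contribution $R\int_{\partial B_R}\langle J'f_*\partial_r,f_*J\partial_r\rangle\,dv$.

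Combining the two pieces produces the claimed formula. The main obstacle I anticipate is the frame-based bookkeeping in the K\"ahler step: one must match orientations so that $Je^1\wedge e^2\wedge Je^2\wedge\cdots\wedge e^n\wedge Je^n$ is the induced surface form on $\partial B_R$, and check carefully that the off-diagonal terms of $\omega^{n-1}/(n-1)!$ really drop out after restriction, leaving only the single piece proportional to $\langle J'f_*\partial_r, f_*J\partial_r\rangle$.
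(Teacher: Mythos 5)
Your proposal is correct and follows essentially the same route as the paper: split $|\bar{\partial}f|^{2}=\tfrac12(e(f)-\langle f^{*}\omega^{N},\omega\rangle)$, handle the energy term by the first variation formula plus harmonicity and the divergence theorem, and handle the K\"ahler term by $\tfrac{d}{dt}f_{t}^{*}\omega^{N}=d\gamma$ with Stokes and a Hermitian frame adapted to $\partial_{r}$. Your wedging against $\omega^{n-1}/(n-1)!$ is literally the paper's $*\omega$, so the two computations coincide.
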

The proof will be separated in two steps.
\begin{proof}
Let $\{ e_{1},...,e_{2n}=\frac{\partial}{\partial r}\}$ be a local
orthonormal frame field, where $\frac{\partial}{\partial r}$ denotes
unit radial vector field. By the definition of $f_{t}(x)$, it is
easy to see that the variation vector field of $f_{t}$ at $t=1$ is
  $$v=\frac{df_{t}}{dt}|_{t=1}=rf_{*}\frac{\partial}{\partial r}.$$

\textbf{Step 1:} From \ref{f2.2} we have
\begin{eqnarray*}
\frac{d}{dt}\int_{B_{R}}e(f_{t})dv|_{t=1} & = & \int_{B_{R}}div\langle v,f_{*}e_{j}\rangle e_{j}dv-\int_{B_{R}}\langle v,\tau(f)\rangle dv\\
                                          & = & \int_{\partial B_{R}}\langle v,f_{*}\frac{\partial}{\partial r}\rangle dv\\
                                          & = & R\int_{\partial B_{R}}|f_{*}\frac{\partial}{\partial r}|^{2}dv.
\end{eqnarray*}
Since $f$ is harmonic, we know that the tension field $\tau(f)=0$ and
the second "$=$" follows from divergence theorem.

\textbf{ Step 2:} On the other hand, from \cite{[X]} we know that
$\frac{d}{dt}f^{*}_{t}\omega^{N}=d\theta_{t}$, where
$\theta_{t}=f^{*}_{t}i(f_{t*}\frac{\partial}{\partial
t})\omega^{N}$. Since
$\frac{df_{t}}{dt}|_{t=1}=rf_{*}\frac{\partial}{\partial r}$, we get
$\theta_{1}=\theta=rf^{*}i(f_{*}\frac{\partial}{\partial
r})\omega^{N}$. Then
\begin{eqnarray*}
&& \frac{d}{dt}\int_{B_{R}}\langle f^{*}_{t}\omega^{N}, \omega\rangle  dv|_{t=1}\\
 & = & \int_{B_{R}}\langle d\theta, \omega\rangle dv\\
 & = & \int_{B_{R}}d(\theta\wedge*\omega)+\int_{B_{R}}\langle\theta,\delta\omega\rangle dv\\
 & = & \int_{\partial B_{R}}\theta\wedge*\omega -\int_{B_{R}}\langle\theta, *d\omega^{n-1}\rangle dv\\
 & = & \int_{\partial B_{R}}\theta\wedge*\omega \\
 & = & -\int_{\partial B_{R}}\theta(e_{i})\omega(e_{i}, \frac{\partial}{\partial r})dv\\
 & = & -R\int_{\partial B_{R}}\omega^{N}(f_{*}\frac{\partial}{\partial r}, f_{*}e_{i})\omega(e_{i},\frac{\partial}{\partial r})dv\\
 & = & -R\int_{\partial B_{R}}\langle  J^{'}f_{*}\frac{\partial}{\partial r}, f_{*}e_{i}\rangle\langle Je_{i}, \frac{\partial}{\partial r}\rangle dv\\
 & = & R\int_{\partial B_{R}}\langle J^{'}f_{*}\frac{\partial}{\partial r}, f_{*}J\frac{\partial}{\partial r}\rangle dv.
\end{eqnarray*}
Note that $\langle d\theta, \omega\rangle dv=d\theta\wedge
*\omega$, the second "=" follows from the differential rules, where
$\delta$ and $*$ are the co-differential and star operators. By
stokes theorem and the definition of $\delta$, the third "=" holds.
The fifth "=" follows from direct computation. Since we may choose
$e_{1}=J\frac{\partial}{\partial r}$, the last "=" holds.

Combining step 1 and step 2, we obtain
$$\frac{d}{dt}\int_{B_{R}}|\bar{\partial}f_{t}|^{2}dv|_{t=1}=\frac{R}{2}\int_{\partial
B_{R}}(|f_{*}\frac{\partial}{\partial
r}|^{2}-\langle J^{'}f_{*}\frac{\partial}{\partial r},
f_{*}J\frac{\partial}{\partial r}\rangle)dv.$$ This completes the proof of
the lemma.
\end{proof}

\begin{remark}
If $M$ is a compact manifold , $\int_{M}\langle f^{*}\omega^{N},
\omega^{M}\rangle dv$ is a homotopy invariant. This was observed firstly by
Lichnerowicz \cite{[L]}.
\end{remark}

\section{Proof of theorem 1.1}
We use the similar trick in \cite{[S]}.

By lemma \ref{l3.1}, we obtain
$$\frac{\partial E(R,t)}{\partial t}|_{t=1}=(2-2n)E(R,1)+R\frac{\partial E(R,1)}{\partial
R}.$$

On the other hand, from lemma \ref{l3.2} and the condition \ref{1.1} in theorem 1.1, one has

\begin{eqnarray*}
\frac{\partial E(R,t)}{\partial t}|_{t=1}& = &\frac{R}{2}\int_{\partial
B_{R}}(|f_{*}\frac{\partial}{\partial
r}|^{2}-\langle J^{'}f_{*}\frac{\partial}{\partial r},
f_{*}J\frac{\partial}{\partial r}\rangle)dv\\
  &\geq&\frac{R}{2}\int_{\partial
B_{R}}(|f_{*}\frac{\partial}{\partial
r}|^{2}-|f_{*}\frac{\partial}{\partial
r}|\cdot|f_{*}J\frac{\partial}{\partial
r}|)dv\\
  &\geq&-\frac{R}{2}\int_{\partial
B_{R}}|f_{*}\frac{\partial}{\partial
r}|\cdot||f_{*}\frac{\partial}{\partial
r}|-|f_{*}J\frac{\partial}{\partial
r}||dv\\
 &\geq&-\frac{R}{2}\cdot R^{2n-1}\cdot \frac{1}{R^{2n+\frac{\alpha}{2}}}\cdot C\\
 & = &-\frac{C}{2}R^{-\frac{\alpha}{2}},
 \end{eqnarray*}
where $C$ is a positive constant. Hence for any
$\epsilon>0$, there exists an $R_{0}$ such that $$\frac{\partial E(R,t)}{\partial t}|_{t=1}\geq-\epsilon$$
 for all $R\geq R_{0}$. Therefore
$$R\frac{\partial E(R,1)}{\partial R}\geq -\epsilon+(2n-2)E(R,1)$$
for $R\geq R_{0}$.

If $E(\infty,1)=\int_{\mathbb{C}^{n}}|\bar{\partial}f|^{2}dv=E>0$,
then there exists a $R_{1}$ such that for all $R\geq R_{1}$, we have
$E(R,1)\geq E_{0}>0$. Since $n\geq2$ we can choose sufficiently small $\epsilon$ such that
$$R\frac{\partial E(R,1)}{\partial R}\geq A=-\epsilon+(2n-2)E_{0}>0,$$ when $R\geq
R_{2}=max(R_{0},R_{1})$. Then
$$E(\infty,1)=\int_{\mathbb{C}^{n}}|\bar{\partial}f|^{2}dv\geq
\int_{R_{2}}^{\infty}\frac{A}{R}dR=\infty.$$ It is a
contradiction. Therefore
$\int_{\mathbb{C}^{n}}|\bar{\partial}f|^{2}dv=0$. Hence $f$ is a
holomorphic map.

\begin{remark}
Compare with the real case \cite{[S]}, lemma \ref{l3.2} has the term $\langle J^{'}f_{*}\frac{\partial}{\partial r},
f_{*}J\frac{\partial}{\partial r}\rangle$. We need use condition \ref{1.1} to control it.
\end{remark}

\begin{remark}
 If we consider the $\partial$-energy density $e'(f)=|\partial f|^{2}=|f_{*}J+J'f_{*}|^{2}$, the corresponding result of theorem 1.1 also holds,
 i.e condition \ref{1.1} is replaced by $e(f)e^{'}(f)(p)=O(\frac{1}{R^{4n+\alpha}})$, the conclusion is that $f$ is a conjugate holomorphic map ($|\partial f|^{2}\equiv 0$).
\end{remark}

\bibliographystyle{amsplain}

\end{document}